\theoremstyle{plain}
 \newtheorem{theorem}{Theorem}[section]
 \newtheorem{proposition}{Proposition}[section]
\theoremstyle{definition}
 \newtheorem{example}{Example}[section]
 \newtheorem{definition}{Definition}[section]
\theoremstyle{remark}
 \newtheorem{remark}{Remark}[section]
 \numberwithin{equation}{section}
\renewcommand{\leq}{\leqslant}
\title[The ideal structure of the minimal Tensor Product of TROs]{The ideal structure of the minimal Tensor Product of Ternary Rings of Operators}
\author[Arpit Kansal and Vandana Rajpal]{\bfseries Arpit Kansal and Vandana Rajpal} 
\begin{document}

\begin{flushleft}\baselineskip9pt\scriptsize

\end{flushleft}
\vspace{18mm} \setcounter{page}{1} \thispagestyle{empty}

\begin{abstract}
Let \( V \) be a ternary ring of operator and \( B \) a \( C^* \)-algebra. We study the structure of the ideal space of the operator space injective tensor product \( V \otimes^{\mathrm{tmin}} B \) via two maps:
\[
\Phi(I, J) = \ker(q_I \otimes^{\mathrm{tmin}} q_J) \quad \text{and} \quad \Delta(I, J) = I \otimes^{\mathrm{tmin}} B + V \otimes^{\mathrm{tmin}} J.
\]
We show that \( \Phi \) is continuous with respect to the hull-kernel topology, and that its restriction to primitive and prime ideals defines a homeomorphism onto dense subsets of the respective ideal spaces of \( V \otimes^{\mathrm{tmin}} B \). 

We prove that if \( \Phi = \Delta \), then \( \Phi \) induces a homeomorphism between the space of minimal primal ideals of \( V \otimes^{\mathrm{tmin}} B \) and the product of the spaces of minimal primal ideals of \( V \) and \( B \). 
\end{abstract}

\maketitle

\section{\textbf{Introduction}}

The study of ideal structures in tensor products of operator algebras has been a central theme in $C^{\ast}$-algebra theory. Of particular interest is understanding how the ideal spaces of the individual algebras relate to the ideal space of their tensor product. This line of investigation dates back to seminal work on minimal tensor products and has since been extended to various tensor norms and categories of algebras. In this context, classical results by Tomiyama, Archbold, Lazar, Somerset, and others have characterized the primitive and prime ideal spaces of $A\otimes^{\text{min}} B$
(see, e.g., \cite{ Lazer, za, walk, Kanimin}). 

Motivated by these developments, we study an analogous problem in the category of ternary rings of operators (TROs), which generalize 
 $C^{\ast}$-algebras by focusing on ternary operations rather than binary multiplication. TROs arise naturally as off-diagonal corners of $C^{\ast}$-algebras and play a key role in the theory of Hilbert
 $C^{\ast}$-modules, Morita equivalence, and non-self-adjoint operator algebras. While the ideal theory of $C^{\ast}$-algebras is well-developed, TROs present new challenges due to the absence of a multiplicative structure, requiring more refined techniques to study their ideal lattices and dual spaces.

 In this paper, we focus on the ideal structure of the t-minimal tensor product \( V \otimes^{\mathrm{tmin}} B \). This tensor product, defined as the closure of the algebraic tensor product \( V \otimes B \) under a natural ternary product inherited from their embeddings into operator spaces, generalizes the minimal \( C^* \)-tensor product and preserves the TRO structure. Despite its natural construction, the behavior of ideals under \( \otimes^{\mathrm{tmin}} \) has not been thoroughly investigated. The goal of this paper is to initiate a systematic study of this ideal structure and to understand the extent to which known results from the \( C^* \)-algebraic setting carry over to this more general context.

We introduce and analyze the canonical map
\[
\Phi : \mathrm{Id}'(V) \times \mathrm{Id}'(B) \to \mathrm{Id}'(V \otimes^{\mathrm{tmin}} B), \quad \Phi(I_1, I_2) := \ker(q_{I_1} \otimes^{\mathrm{tmin}} q_{I_2}),
\]
where \( q_{I_1} : V \to V/I_1 \) and \( q_{I_2} : B \to B/I_2 \) denote the respective quotient maps, and \( \mathrm{Id}'(X) \) denotes the collection of all proper closed ideals of a space \( X \). We examine the algebraic and topological features of this map.
 Our study focuses particularly on how classes of ideals such as prime, primitive and primal ideals are reflected and preserved under this map. We provide a complete characterization of the image of \( \Phi \) in the case of primal ideals of \( V \otimes^{\mathrm{tmin}} B \) in terms of the primal ideals of \( V \) and \( B \). These results extend and generalize known descriptions from the \( C^* \)-algebraic framework to the setting of TROs.

The paper is organized as follows. In Section~2, we collect the essential background and preliminary results on TROs and the construction of the t-minimal tensor product with a $C^*$-algebra. We also discuss the associated linking $C^*$-algebra $\mathcal{A}(V)$, the correspondence between ideals of $V$ and $\mathcal{A}(V)$, and review various classes of ideals such as prime, primitive and primal ideals, along with their topological properties.

In Section~3, we introduce and study the maps $\Phi$ and $\Delta$ between the ideal spaces of $V$, $B$, and $V \otimes^{\text{tmin}} B$. Our primary focus is on the algebraic and topological behavior of $\Phi$, including its continuity and interaction with different classes of ideals. In particular, we show that $\Phi$ restricts to homeomorphisms onto dense subsets of the prime, primitive ideal spaces, and becomes a homeomorphism onto its image when extended to all proper closed ideals.

In Section~4, we study the structure of primal ideals in the t-minimal tensor product $V \otimes^{\mathrm{tmin}} B$. We begin by recalling the definition and basic properties of primal ideals, followed by a detailed analysis of how primality behaves under the canonical map $\Phi$ and its interaction with the product-type map $\Delta$. In particular, we characterize the image of $\Phi$ restricted to the set of primal ideals of $V \otimes^{\mathrm{tmin}} B$ and establish the necessary and sufficient conditions for an ideal of the form $\Delta(I, J)$ to be primal. We also show that, under natural assumptions,  the map
\[
\Phi \colon \operatorname{Min\text{-}Primal}(V) \times \operatorname{Min\text{-}Primal}(B) \to \operatorname{Min\text{-}Primal}(V \otimes^{\mathrm{tmin}} B)
\]
is a homeomorphism.

.

\section{Preliminary results} 

Let \( V \subseteq B(\mathcal{H}, \mathcal{K}) \) be a TRO, that is, a norm-closed subspace closed under the ternary product:
\[
(x, y, z) \mapsto x y^* z, \quad \text{for all } x, y, z \in V.
\]
TROs naturally arise as off-diagonal corners in certain $C^*$-algebras. In particular, following \cite{MHA}, to every TRO \( V \) one can associate a linking $C^*$-algebra \( \mathcal{A}(V) \) given by:
\[
\mathcal{A}(V) =
\begin{bmatrix}
C(V) & V \\
V^* & D(V)
\end{bmatrix},
\]
where \( C(V) = \mathrm{C}^*(VV^*) \) and \( D(V) = \mathrm{C}^*(V^*V) \) are the $C^*$-algebras generated by \( VV^* \subseteq B(\mathcal{K}) \) and \( V^*V \subseteq B(\mathcal{H}) \), respectively.

\begin{definition}
Let \( V \) be a TRO, and let \( I \subseteq V \) be a closed subspace. We consider several classes of ideals within \( V \) as follows:
\begin{enumerate}
    \item A subspace \( I \) is called an ideal of \( V \) if it satisfies the inclusion
    \[
    IV^*V + VV^*I \subseteq I.
    \]
    The set of all closed ideals of \( V \) is denoted by \( \operatorname{Id}(V) \). We equip \( \operatorname{Id}(V) \) with the \emph{weak topology} \( \tau_w \), which is generated by sub-basis sets of the form
    \[
    U(J) = \{ I \in \operatorname{Id}(V) : I \not\supseteq J \}, \quad \text{for } J \in \operatorname{Id}(V).
    \]
    
    \item An ideal \( P \in \operatorname{Id}(V) \) is said to be prime if for any ideals \( I, J, K \in \operatorname{Id}(V) \), the inclusion \( IJK \subseteq P \) implies that at least one of \( I \subseteq P \), \( J \subseteq P \), or \( K \subseteq P \) holds. The collection of prime ideals of \( V \) is denoted by \( \operatorname{Prime}(V) \), equipped with the subspace topology inherited from \( \operatorname{Id}(V) \).
    
    \item An ideal \( I \in \operatorname{Id}(V) \) is called primitive if it is the kernel of an irreducible representation of \( V \). The set of primitive ideals is denoted by \( \operatorname{Prim}(V) \), also equipped with the subspace topology inherited from \( \operatorname{Id}(V) \).
    
\end{enumerate}
\end{definition}

\vspace{0.5em}

We now recall a natural map that connects the ideal structure of a TRO with that of its linking \( C^* \)-algebra. The canonical map
\[
\theta : \operatorname{Id}(V) \to \operatorname{Id}(\mathcal{A}(V))
\]
is defined by
\[
\theta(I) := \mathcal{A}(I),
\]
where \( \mathcal{A}(I) \) denotes the ideal of the linking $C^{\ast}$-algebra \( \mathcal{A}(V) \) generated by \( I \). The following theorem summarizes known results regarding the behavior of \( \theta \) on various ideal classes.

\begin{theorem} \label{sec3main1}
The map \( \theta : \operatorname{Id}(V) \to \operatorname{Id}(\mathcal{A}(V)) \) satisfies the following properties:
\begin{enumerate}
    \item[(i)] \textup{(\cite{AAV}, Theorem 2.6)} The restriction of \( \theta \) to \( \operatorname{Prime}(V) \) is a homeomorphism onto \( \operatorname{Prime}(\mathcal{A}(V)) \).
    
    \item[(ii)] \textup{(\cite{AA}, Proposition 3)} The restriction of \( \theta \) to \( \operatorname{Prim}(V) \) is a homeomorphism onto \( \operatorname{Prim}(\mathcal{A}(V)) \).
    
    
    
\end{enumerate}
\end{theorem}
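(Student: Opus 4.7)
The plan is to exploit the well-known structural correspondence, due essentially to Hamana and refined in \cite{MHA,AAV,AA}, between closed subspaces of $V$ and closed ideals of the linking $C^*$-algebra $\mathcal{A}(V)$. The first step is to show that the assignment $I \mapsto \mathcal{A}(I)$ is a lattice isomorphism from $\operatorname{Id}(V)$ onto its image in $\operatorname{Id}(\mathcal{A}(V))$, with inverse given by extracting the off-diagonal $V$-component $J \mapsto J \cap V$. Concretely, one checks that for a closed ideal $I \subseteq V$ the $2\times 2$ matrix
\[
\mathcal{A}(I) = \begin{bmatrix} \overline{IV^*} & I \\ I^* & \overline{V^*I} \end{bmatrix}
\]
is a genuine closed two-sided ideal of $\mathcal{A}(V)$, and conversely that the $(1,2)$-corner of any closed ideal of $\mathcal{A}(V)$ is a TRO ideal of $V$. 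These operations are mutually inverse and both preserve arbitrary intersections and sums, hence order, which is the key handle for the topological part.

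For part (i), to transfer primality, I would use the characterization: $I \in \operatorname{Id}(V)$ is prime iff whenever $I_1, I_2 \in \operatorname{Id}(V)$ satisfy $I_1 V^* I_2 \subseteq I$ one has $I_1 \subseteq I$ or $I_2 \subseteq I$ (a two-ideal reformulation of the three-ideal condition in the definition). Since the lattice isomorphism above translates the ternary product $I_1 V^* I_2$ into the ordinary product $\mathcal{A}(I_1)\cdot\mathcal{A}(I_2)$ inside $\mathcal{A}(V)$, primality transfers cleanly: $\theta(I)$ is prime in $\mathcal{A}(V)$ iff $I$ is prime in $V$. Surjectivity onto $\operatorname{Prime}(\mathcal{A}(V))$ follows by taking $J \in \operatorname{Prime}(\mathcal{A}(V))$ and verifying that $I := J \cap V$ is prime and satisfies $\theta(I) = J$ (here one uses that a prime ideal of $\mathcal{A}(V)$ is determined by its off-diagonal corner).

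For part (ii), the route is via representations. Every non-degenerate ternary representation $\pi : V \to B(\mathcal H,\mathcal K)$ extends canonically to a $\ast$-representation $\tilde\pi$ of $\mathcal{A}(V)$ on $\mathcal H \oplus \mathcal K$, sending the off-diagonal entries to $\pi(\cdot)$ and $\pi(\cdot)^*$, and the diagonal corners to the induced $\ast$-representations of $C(V)$ and $D(V)$. Irreducibility of $\pi$ is equivalent to irreducibility of $\tilde\pi$, and conversely every irreducible representation of $\mathcal{A}(V)$ restricts (after cutting by the corner projections) to an irreducible ternary representation of $V$. Tracking kernels across this correspondence shows that $\theta(\ker \pi) = \ker \tilde\pi$, which establishes the bijection between $\operatorname{Prim}(V)$ and $\operatorname{Prim}(\mathcal{A}(V))$.

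Finally, to upgrade both bijections to homeomorphisms, I would argue topologically from the fact that $\theta$ is an order isomorphism onto its image. The weak (hull-kernel) topology on $\operatorname{Id}(V)$ is generated by the sub-basis sets $U(J)$, and $\theta^{-1}(U(\mathcal{A}(J))) = U(J)$ by the lattice property, giving continuity of $\theta$; the analogous computation for the inverse $J \mapsto J \cap V$ gives continuity in the other direction. I expect the main obstacle to be the primitive-ideal part of (ii), specifically showing that restriction of an irreducible $\ast$-representation of $\mathcal{A}(V)$ to the $(1,2)$-corner remains irreducible as a ternary representation of $V$ (one must rule out the degenerate case where the representation vanishes on $V$), and dually that extension of an irreducible ternary representation does not lose irreducibility on the diagonal corners $C(V)$ and $D(V)$.
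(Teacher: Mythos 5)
The paper offers no proof of this theorem: it is explicitly presented as a summary of known results, with (i) cited to Theorem 2.6 of the reference on representations of $C^*$-ternary rings and (ii) to Proposition 3 of the reference on ideals in the Haagerup tensor product, so there is no internal argument to compare against. Your sketch is a correct reconstruction of the standard route taken in those sources: the corner correspondence $I \mapsto \mathcal{A}(I)$, $J \mapsto e_{11}Je_{22}$ is indeed a mutually inverse pair of order isomorphisms (the key point being that any closed ideal $J$ of $\mathcal{A}(V)$ satisfies $J_{11} = \overline{\operatorname{span}}\, J_{12}V^{*}$ and its analogues, which follows from $V = \overline{\operatorname{span}}\, VV^{*}V$ and approximate identities), and this lattice isomorphism simultaneously handles the transfer of primality, the identity $\theta^{-1}(U(\mathcal{A}(J))) = U(J)$, and hence the bicontinuity. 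The one obstacle you flag in (ii) closes easily: an irreducible representation of $\mathcal{A}(V)$ cannot vanish on the corner $V$, since it would then vanish on $C(V) = C^{*}(VV^{*})$ and $D(V) = C^{*}(V^{*}V)$ and so be zero; with that observed, your sketch is complete in outline.
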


For each subset \( S \) of \( V \), define  
\[
\operatorname{hull}(S) = \{ P \in \operatorname{Prim}(V) \mid S \subseteq P \}
\]  
and for each subset \( B \) of \( \operatorname{Prim}(V) \), let  
\[
\operatorname{ker}(B) = \bigcap \{ P \mid P \in B \}.
\]  
For any ideal \( I \) of \( V \), we have \( \operatorname{ker}(\operatorname{hull}(I)) = I \). As a consequence, we obtain 
\[
U(J) = \{ I \in \operatorname{Id}(V) \mid \operatorname{hull}(I) \cap V(J) \neq \emptyset \},
\]  
where \( V(J) = \{ P \in \operatorname{Prim}(V) \mid P \not\supseteq J \} \).

Let \( V \subseteq B(\mathcal{H}_1, \mathcal{K}_1) \) and \( W \subseteq B(\mathcal{H}_2, \mathcal{K}_2) \) be two TROs. On the algebraic tensor product \( V \otimes W \), one defines a natural triple product:
\[
(v_1 \otimes w_1)(v_2 \otimes w_2)^*(v_3 \otimes w_3) = (v_1 v_2^* v_3) \otimes (w_1 w_2^* w_3),
\]
for all \( v_i \in V \), \( w_i \in W \). 

The t-minimal tensor product \( V \otimes^{\mathrm{tmin}} W \) is then defined as the closure of this algebraic tensor product under the induced operator norm in \( B(\mathcal{H}_1 \otimes \mathcal{H}_2, \mathcal{K}_1 \otimes \mathcal{K}_2) \). That is, \( V \otimes^{\mathrm{tmin}} W \subseteq B(\mathcal{H}_1 \otimes \mathcal{H}_2, \mathcal{K}_1 \otimes \mathcal{K}_2) \) is again a TRO with respect to this ternary product. For more details, the reader is referred to (\cite{ER}, \cite{MK_JR}). 

An important feature of this construction is that when both \( V \) and \( W \) are $C^*$-algebras, the t-minimal tensor product agrees with the minimal $C^*$-tensor product:
\[
V \otimes^{\mathrm{tmin}} W = V \otimes^{\min} W.
\]

Moreover, $\mathcal{A}(V \otimes^{\mathrm{tmin}} B)= \mathcal{A}(V)\otimes^{\min} B $ (\cite{MK_JR}, Proposition 3.1). The t-minimal tensor product also behaves well under homomorphisms, as captured in the following proposition:

\begin{proposition}[\cite{ER}, Proposition 9.2.5]
Let \( V_i, W_i \) be TROs for \( i = 1, 2 \), and let \( f_i : V_i \to W_i \) be maps. If \( f_1 \) and \( f_2 \) are TRO-homomorphisms, then so is the map \( f_1 \otimes f_2 : V_1 \otimes^{\mathrm{tmin}} V_2 \to W_1 \otimes^{\mathrm{tmin}} W_2 \).

\end{proposition}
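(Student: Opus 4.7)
The plan is to lift the problem to the linking $C^*$-algebras, where the analogous statement for the minimal $C^*$-tensor product is standard, and then restrict back to the TRO corners.

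First, I would verify the purely algebraic part: that $f_1 \otimes f_2$ preserves the ternary product on the algebraic tensor product $V_1 \odot V_2$. On elementary tensors this reduces to
\[
(f_1\otimes f_2)\bigl((v_1 v_2^* v_3)\otimes(w_1 w_2^* w_3)\bigr) = f_1(v_1)f_1(v_2)^*f_1(v_3)\otimes f_2(w_1)f_2(w_2)^*f_2(w_3),
\]
using only that each $f_i$ is itself a TRO-homomorphism, and linearity extends this to all of $V_1 \odot V_2$. Thus the entire problem reduces to showing that $f_1 \otimes f_2$ is bounded with respect to the t-minimal norm, so that it extends continuously to the closure.

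For continuity, I would invoke the linking algebra construction. Each TRO-homomorphism $f_i: V_i \to W_i$ extends uniquely to a $*$-homomorphism $\tilde f_i: \mathcal{A}(V_i) \to \mathcal{A}(W_i)$, defined on the four corners by $vw^* \mapsto f_i(v)f_i(w)^*$, $v \mapsto f_i(v)$, $v^* \mapsto f_i(v)^*$, and $v^*w \mapsto f_i(v)^*f_i(w)$, with straightforward checks that these pieces agree on overlaps and define a $*$-homomorphism. By the standard functoriality of $\otimes^{\min}$ for $C^*$-algebras, $\tilde f_1 \otimes \tilde f_2$ is then a contractive $*$-homomorphism from $\mathcal{A}(V_1)\otimes^{\min}\mathcal{A}(V_2)$ into $\mathcal{A}(W_1)\otimes^{\min}\mathcal{A}(W_2)$. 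Since $V_1 \otimes^{\mathrm{tmin}} V_2$ sits isometrically as the $(1,2)$-corner of $\mathcal{A}(V_1)\otimes^{\min}\mathcal{A}(V_2)$, restricting $\tilde f_1 \otimes \tilde f_2$ to this corner yields a bounded linear map whose restriction to $V_1 \odot V_2$ agrees with $f_1 \otimes f_2$. Being the corner of a $*$-homomorphism, this map automatically preserves the ternary product, giving the desired TRO-homomorphism.

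The main obstacle is the isometric identification of $V_1 \otimes^{\mathrm{tmin}} V_2$ with the off-diagonal corner of $\mathcal{A}(V_1)\otimes^{\min}\mathcal{A}(V_2)$. The excerpt records this identification (for the linking algebra) only in the case when one factor is a $C^*$-algebra, namely $\mathcal{A}(V \otimes^{\mathrm{tmin}} B) = \mathcal{A}(V) \otimes^{\min} B$. The fully general two-TRO version requires comparing the spatial norm from the embedding into $B(\mathcal{H}_1\otimes\mathcal{H}_2, \mathcal{K}_1\otimes\mathcal{K}_2)$ with the norm inherited from the minimal tensor product of the linking algebras; the upper bound is immediate from the spatial realization, while the lower bound follows by choosing faithful representations of $\mathcal{A}(V_i)$ extending the original embeddings of $V_i$. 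Once this corner identification is in hand, the proof reduces cleanly to the well-known minimal tensor product functoriality for $C^*$-algebras.
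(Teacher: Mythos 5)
The paper does not actually prove this proposition --- it is quoted directly from Effros--Ruan \cite{ER} with no argument given --- so there is no internal proof to compare against; I can only assess your argument on its own terms, and it is sound. Your route (lift each $f_i$ to a $*$-homomorphism $\tilde f_i$ of linking algebras, apply functoriality of $\otimes^{\min}$ for $C^*$-algebras, and restrict to the off-diagonal corner) is a genuinely different strategy from the one suggested by the source, which treats $\otimes^{\mathrm{tmin}}$ as the operator space injective tensor product and uses the fact that TRO-homomorphisms are complete contractions together with the functoriality of $\check\otimes$ under completely contractive maps; your version trades that operator-space machinery for two structural lemmas about linking algebras. Both of those lemmas are correctly identified and both hold: the corner identification goes exactly as you sketch (the $\min$ norm is representation-independent, so one may use the defining representations of $\mathcal{A}(V_i)$ on $\mathcal{K}_i\oplus\mathcal{H}_i$, where an element of $V_1\odot V_2$ acts only from $\mathcal{H}_1\otimes\mathcal{H}_2$ into $\mathcal{K}_1\otimes\mathcal{K}_2$ and hence has exactly the spatial t-minimal norm). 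The one place you undersell the work is the phrase ``straightforward checks'' for the extension $\tilde f_i$: the well-definedness and contractivity of $\sum_j v_jw_j^*\mapsto\sum_j f_i(v_j)f_i(w_j)^*$ on $C(V_i)=C^*(V_iV_i^*)$ (and likewise on $D(V_i)$) is not purely formal, though it is a standard fact about morphisms of linking algebras (see, e.g., \cite{MHA}, \cite{MK_JR}) and is safely citable. With that caveat, the proof is complete and arguably more self-contained for a reader coming from $C^*$-algebra theory, which is the audience of this paper.
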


\section{Ideals of $V \otimes^{tmin} B$}

In this section, we investigate the topological behavior of the canonical map
\[
\Phi : \mathrm{Id}'(V) \times \mathrm{Id}'(B) \to \mathrm{Id}'(V \otimes^{\mathrm{tmin}} B), \quad \Phi(I_1, I_2) := \ker(q_{I_1} \otimes^{\mathrm{tmin}} q_{I_2}),
\]
Alongside, we also consider the map
\[
\Delta(I_1, I_2) := I_1 \otimes^{\mathrm{tmin}} B + V \otimes^{\mathrm{tmin}} I_2.
\]

Our aim is to explore various topological aspects of $\Phi$, particularly when restricted to distinguished classes of ideals such as primitive and prime ideals. We show that under appropriate conditions, these restrictions are continuous and yield homeomorphisms onto their respective images. The main results of this section are summarized in the following theorem.

\begin{theorem} \label{thm3main}
Let $V$ be a TRO and $B$ a $C^*$-algebra, then the following hold:
\begin{enumerate}
    \item[(i)] The restriction of $\Phi$  to  $\mathrm{Prim}(V) \times \mathrm{Prim}(B)$ is a homeomorphism onto its image which is dense in $\mathrm{Prim}(V\otimes^{tmin} B)$. If $V$ or $B$ is exact, then $\Phi$ maps $\mathrm{Prim}(V) \times \mathrm{Prim}(B)$ onto $\mathrm{Prim}(V\otimes^{tmin} B)$.
     \item[(ii)] If $I, K \in Id'(V)$ and $J,L \in Id'(B)$ are such that $I \subseteq K$ and $J \subseteq L$  then $\Phi(I,J) \subseteq \Phi(K,L) $.
    \item[(iii)] The mapping $\Phi$ is $\tau_w$-continuous.
    \item[(iv)] The restriction of $\Phi$ to $\mathrm{Id}'(V) \times \mathrm{Id}'(B)$ is a homeomorphism onto its image in $\mathrm{Id}'(V\otimes^{tmin} B)$.
    \item[(v)] The restriction of $\Phi$ to $\mathrm{Prime}(V) \times \mathrm{Prime}(B)$ is a homeomorphism onto 
a dense subset of $\mathrm{Prime}(V\otimes^{tmin} B)$.

\end{enumerate}
\end{theorem}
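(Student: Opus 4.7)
The plan is to reduce each claim to a corresponding statement about the minimal $C^*$-algebraic tensor product $\mathcal{A}(V)\otimes^{\min}B$ and transport it back through the linking-algebra correspondence. The two structural ingredients I would lean on are the isomorphism $\mathcal{A}(V\otimes^{\mathrm{tmin}}B)\cong\mathcal{A}(V)\otimes^{\min}B$ of \cite{MK_JR}, Proposition 3.1, and the homeomorphisms $\theta\colon\mathrm{Prim}(V)\to\mathrm{Prim}(\mathcal{A}(V))$ and $\theta\colon\mathrm{Prime}(V)\to\mathrm{Prime}(\mathcal{A}(V))$ of Theorem \ref{sec3main1}. The preliminary lemma to establish is the compatibility
\[
\theta\bigl(\Phi(I,J)\bigr)=\ker\bigl(q_{\theta(I)}\otimes^{\min}q_J\bigr),
\]
which would follow from the quotient identification $\mathcal{A}(V/I)\cong\mathcal{A}(V)/\theta(I)$ together with the functoriality of $\otimes^{\mathrm{tmin}}$ under TRO homomorphisms (Proposition 9.2.5 of \cite{ER}). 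This shows that $\theta$ intertwines $\Phi$ with its $C^*$-algebraic analogue $\Phi^{\mathcal{A}}$.

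For parts (i) and (v), I would then invoke the classical results of Archbold, Lazar, Somerset, and Kaniuth (\cite{Lazer, za, walk, Kanimin}) which give, for the minimal $C^*$-tensor product, the homeomorphism of $\mathrm{Prim}(\mathcal{A}(V))\times\mathrm{Prim}(B)$ onto a dense subset of $\mathrm{Prim}(\mathcal{A}(V)\otimes^{\min}B)$, the analogous statement for prime ideals, and surjectivity when one factor is exact. Transporting these statements through the commuting diagrams of $\theta$'s, and using the equivalence between exactness of $V$ as an operator space and exactness of $\mathcal{A}(V)$ as a $C^*$-algebra, yields both (i) and (v).

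Part (ii) is formal: if $I\subseteq K$, then $q_K=\pi_K\circ q_I$ for the canonical surjection $\pi_K\colon V/I\to V/K$, and similarly $q_L=\pi_L\circ q_J$; tensoring and comparing kernels gives $\Phi(I,J)\subseteq\Phi(K,L)$. For (iii), I would show $\Phi^{-1}(U(\mathcal{J}))$ is open by combining the identity $U(\mathcal{J})=\{\mathcal{I}:\mathrm{hull}(\mathcal{I})\cap V(\mathcal{J})\ne\varnothing\}$ with part (i): membership becomes the existence of a primitive pair $(P_1,P_2)$ with $\Phi(I,J)\subseteq\Phi(P_1,P_2)$ and $\mathcal{J}\not\subseteq\Phi(P_1,P_2)$; by monotonicity this unpacks as an open condition in the product $\tau_w$-topology. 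For (iv), injectivity would follow from a slice-map argument applied in the linking algebra: slice maps recover $\theta(I)$, and hence $I$, from $\theta(\Phi(I,J))$, and symmetrically recover $J$; the inverse continuity on the image then follows by showing that the image of each basic open rectangle in $\mathrm{Id}'(V)\times\mathrm{Id}'(B)$ is the trace on $\Phi(\mathrm{Id}'(V)\times\mathrm{Id}'(B))$ of a $\tau_w$-open set.

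The main obstacle I foresee is the preliminary compatibility lemma together with the slice-map step of (iv). Verifying that the closed ideal $\theta(\Phi(I,J))$ in $\mathcal{A}(V)\otimes^{\min}B$ coincides exactly with $\ker(q_{\theta(I)}\otimes^{\min}q_J)$ requires care, since the functor $V\mapsto\mathcal{A}(V)$ is not manifestly compatible with arbitrary kernel operations; the cleanest route is probably to characterize both ideals as elements annihilated by the corresponding composed quotient map and to check this using the universal properties of the linking algebra and of $\otimes^{\mathrm{tmin}}$. The slice-map injectivity in (iv) is similarly delicate because slice maps on t-minimal tensor products of TROs do not inherit all the formal features of their $C^*$-counterparts, and here too reducing to the linking algebra, where the classical slice-map machinery is available, is likely the most efficient route.
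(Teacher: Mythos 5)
Your proposal follows essentially the same route as the paper: everything is transported to $\mathcal{A}(V)\otimes^{\min}B$ via the compatibility lemma $\mathcal{A}(\ker(q_{I}\otimes^{\mathrm{tmin}}q_{J}))=\ker(q_{\mathcal{A}(I)}\otimes^{\min}q_{J})$ (this is exactly \cite{AAV}, Lemma~2.7, so it need not be reproved), parts (i) and (v) come from the commuting square with $\theta\times 1$, part (ii) from factoring quotient maps, and part (iv) from the slice-type left inverse $I\mapsto(I_V,I_B)$ with continuity of that inverse. The only step to make explicit in (iii) is the identity $\Phi(I,J)=\bigcap\{\Phi(P,Q):P\in\operatorname{hull}(I),\,Q\in\operatorname{hull}(J)\}$ (obtained from Lazar's result through the linking algebra), which is what turns membership of $\Phi(I,J)$ in $U(K)$ into the existence of a primitive pair and hence into an open condition.
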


\begin{proof}
    (i)  We first note that the map  $\Phi$ restricted  to  $\mathrm{Prim}(V) \times \mathrm{Prim}(B)$ is well defined by (\cite{wulf}, \S \: 2), (\cite{AAV}, Theorem 2.6) and the property that $\ker (q_{\mathcal{A}(I_1)} \otimes^{min} q_{I_2} )=\mathcal{A}(\ker (q_{I_1} \otimes^{tmin} q_{I_2} ))$ (\cite{AAV}, Lemma $2.7$).
The map $\theta \times 1:\operatorname{Prim}(V) \times \operatorname{Prim}(B) \to \operatorname{Prim}(\mathcal{A}(V)) \times \operatorname{Prim}(B) $ is a homeomorphism. Moreover, the map $\Psi: \operatorname{Prim}(V \otimes^{\text{tmin}} B) \to \operatorname{Prim}(\mathcal{A}(V) \otimes^{\text{min}} B)$ is also a homeomorphism (\cite{MK_JR}, Proposition $3.1$). Furthermore, the map $\Phi': \operatorname{Prim}(\mathcal{A}(V)) \times \operatorname{Prim}(B) \to \operatorname{Prim}(\mathcal{A}(V) \otimes^{\text{min}} B)  $ is a homeomorphism onto its image (\cite{Gui}, Theorem $5$). Thus, we obtain the following commutative diagram

\[
\begin{tikzcd}
  \operatorname{Prim}(V) \times \operatorname{Prim}(B)
    \arrow[r, "\Phi"]
    \arrow[d, swap, "\theta \times 1"]
  &
  \operatorname{Prim}(V \otimes^{\text{tmin}} B)
    \arrow[d, swap, "\Psi"]
  \\
  \operatorname{Prim}(\mathcal{A}(V)) \times \operatorname{Prim}(B)
    \arrow[r, swap, "\Phi'"]
  &
  \operatorname{Prim}(\mathcal{A}(V)\otimes^{\mathrm{min}} B)
\end{tikzcd}
\]
Using the commutativity of the diagram, it follows that the restriction of $\Phi$ to \newline $\operatorname{Prim}(V) \times \operatorname{Prim}(B)$  is a homeomorphism onto its image.  
If $V$ or $B$ is exact, then  $\mathcal{A}(V)$ is exact by (\cite{AA}, Lemma 3). Thus the result follows from  (\cite{EB}, Proposition 2.16 and 2.17).\\

(ii) Let $I \subseteq K$ be ideals in $V$ and $J \subseteq L$ in $B$. Then there exists a natural *-homomorphism
\[
\theta_{K, L} : (V / I) \otimes^{\ tmin} (B / J) \to (V / K) \otimes^{\ tmin} (B / L)
\]
which is defined on the algebraic tensor product by
\[
\theta_{K, L}((v + I) \otimes (b + J)) = (v + K) \otimes (b + L),
\]
and extends by continuity to the minimal tensor product. (Proposition $2.1$). This homomorphism satisfies:
\[
\theta_{K, L} \circ (q_{I} \otimes^{tmin} q_{J}) = q_{K} \otimes^{tmin} q_{L},
\]
from which it follows that:
\[
\ker(q_{I} \otimes^{tmin} q_{J}) \subseteq \ker(q_{K} \otimes^{tmin} q_{L}),
\]
i.e.,
\[
\Phi(I, I) \subseteq \Phi(K, L).\]
(iii)  We have  to show that if $K \in \mathrm{Id}'(V\otimes^{tmin} B)$ and $I_0 \in \operatorname{Id}(V)$, $J_0 \in \operatorname{Id}(B)$ satisfy $\Phi(I_0, J_0) \in U(K)$,  there exist $\tau_w$-neighbourhoods $U_1$ of $I_0$ and $U_2$ of $J_0$ in $\operatorname{Id}(V)$ and $\operatorname{Id}(B)$, respectively, such that $\Phi(U_! \times U_2) \subseteq U(K)$. 

 Since $I_0$ and $J_0$ are necessarily proper, it follows, by (\cite{Lazer}, Remark $2.4$) and (\cite{AAV}, Theorem $2.6(4))$ that the kernel of this map can be expressed as  
\[
\ker(q_{\mathcal{A}(I_0)} \otimes^{\mathrm{min}} q_{J_0}) = \bigcap_{(P, Q) \in \operatorname{hull}(I_0) \times \operatorname{hull}(J_0)} \ker(q_{\mathcal{A}(P)} \otimes^{\mathrm{min}} q_Q).
\]  

On the other hand, it follows from (\cite{AAV}, Lemma $2.7$) that  
\[
\ker(q_{\mathcal{A}(I_0)} \otimes^{\mathrm{min}} q_{J_0}) = \mathcal{A}(\ker(q_{I_0} \otimes^{\mathrm{tmin}} q_{J_0})),
\]  
and since the functor \(\mathcal{A}\) is injective on ideals, we conclude that  
\[
\ker(q_{I_0} \otimes^{\mathrm{tmin}} q_{J_0}) = \bigcap_{(P, Q) \in \operatorname{hull}(I_0) \times \operatorname{hull}(J_0)} \ker(q_P \otimes^{\mathrm{tmin}} q_Q).
\]
 
That is, $\Phi(I_0, J_0)=\bigcap_{P \in hull(I_0),  Q \in hull(J_0)} \Phi(P,Q)$.
Since $\Phi$ is continuous on $\text{Prim}(V) \times \text{Prim}(B)$  and
\[
\Phi(P_0, Q_0) \in U(K) \cap \text{Prim}(V \otimes^{tmin} B),
\]
there exist open neighbourhoods $V_0$ of $P_0$ in $\text{Prim}(V)$ and $W_0$ of $Q_0$ in $\text{Prim}(B)$ such that $\Phi(V_0 \times W_0) \subseteq U(K)$.  Now, define
\[
U_1 = \{I \in \operatorname{Id}(V): hull(I) \cap V_0 \neq \emptyset\}, \quad U_2 = \{J \in \operatorname{Id}(B): hull(J) \cap W_0 \neq \emptyset\}.
\]
Then $I_0 \in U_1$ and $J_0 \in U_2$, and $U_1$,  $U_2$ are $\tau_w$-open in $\operatorname{Id}(V)$ and  $\operatorname{Id}(B)$, respectively. Finally, let $I \in U_1$ and $J \in U_2$. Then, there exist $P \in hull(I) \cap V_0$ and $Q \in hull(J) \cap W_0$.  Then by (ii) part, $\Phi(P,Q) \supseteq \Phi(I,J)$. Since $\Phi(P,Q)\in U(K) $ and so $\Phi(I,J)\in U(K)$.

(iv) Consider the map $\Psi: \mathrm{Id}'(V\otimes^{tmin} B) \to \mathrm{Id}'(V) \times \mathrm{Id}'(B) $ defined as $\Psi(I)=(I_V, I_B)$, $I_V=\{ v\in V: v\otimes b\in I, for\: all \: b\in B\}$, and $I_B=\{ b\in B: v\otimes b\in I, for\: all \:v\in V\}$. We first show that this map is well defined. Let $I$ be the closed ideal in  $V\otimes^{tmin} B$, then by the construction of the associated functor $\mathcal{A}$, $\mathcal{A}(I)$ is a closed ideal in
$\mathcal{A}(V)\otimes^{min} B$ and therefore the corresponding ideal in $\mathcal{A}(V)$ is given by \[
\mathcal{A}(I)_{\mathcal{A}(V)} = \mathcal{A}(I_V) = \{ a \in \mathcal{A}(V) \mid a \otimes B \subseteq \mathcal{A}(I) \}
\]
 This implies that $\mathcal{A}(I_V)$   is a closed ideal of $\mathcal{A}(V)$ \cite{EB} and hence $I_V$  is a closed ideal of $V$. The same happens with $I_B$.

Next, we show that the composition $\Psi \circ \Phi$ restricted to $\mathrm{Id}'(V) \times \mathrm{Id}'(B)$ is the identity map.

Let $(I_1, I_2) \in \mathrm{Id}'(V) \times \mathrm{Id}'(B)$ and set $I := \Phi(I_1, I_2) = \ker(q_{I_1} \otimes^{tmin} q_{I_2})$. Clearly, $I_1 \subseteq I_V$ and $I_2 \subseteq I_B$. Now let $v \in I_V$. Then $v \otimes b \in I= \ker(q_{I_1} \otimes q_{I_2}) $, and so $(v+I_1)\otimes (b+I_2)=0$, it follows that $v\in I_1$. Hence, $I_V = I_1$. A similar argument shows $I_B = I_2$, proving that $\Psi \circ \Phi = \mathrm{id}$.  We now show that $\Psi$ is a  continuous map. Consider a basic open set $U(I_1) \times U(I_2)$. Then $\Psi^{-1}(U(I_1) \times U(I_2))=\{I \in \operatorname{Id}(V \otimes^{tmin} B) \mid \operatorname{hull} (I_{V}))\cap V(I_1)\neq \emptyset, \operatorname{hull} (I_{B}) \cap V(I_2)\neq \emptyset\}=\{I \in \operatorname{Id}(V \otimes^{tmin} B) \mid  I_{V} \not\supseteq I_1,   I_{B} \not\supseteq I_2\}=\{I \in \operatorname{Id}(V \otimes^{tmin} B) \mid I \supseteq  V \otimes^{tmin}  I_{B}  \not\supseteq  V \otimes^{tmin}  I_{2}, I \supseteq  I_V \otimes^{tmin} B  \not\supseteq  I_1 \otimes^{tmin}  B\} = \{I \in \operatorname{Id}(V \otimes^{tmin} B) \mid I \not\supseteq  V \otimes^{tmin} I_{2}, I   \not\supseteq  I_1 \otimes^{tmin}  B\}= U(V \otimes^ {tmin}  I_{2}) \cap  U(I_1 \otimes^{tmin}  B) $, which is open in  $V \otimes^{tmin} B$. It follows that $\Phi$ maps open subsets of  $ \mathrm{Id}'(V) \times \mathrm{Id}'(B)$ onto relatively open subsets of  $V \otimes^{tmin} B$. Thus,  $\Phi$ is a homeomorphism onto its image.

To show density: let $W_r \subseteq \mathrm{Prim}(V \otimes^{\ tmin} B)$ be open subsets for $1 \leq r \leq m$.  Since the map $\mathrm{Prim}(V) \times \mathrm{Prim}(B)$ is a homeomorphism onto its image which is dense in $\mathrm{Prim}(V\otimes^{tmin} B)$ by (i) part. There exist primitive ideals $P_r^{(1)} \in \mathrm{Prim}(V)$ and $P_r^{(2)} \in \mathrm{Prim}(B)$ such that $\Phi(P_r^{(1)}, P_r^{(2)}) \in W_r$. Define
\[
I_1 := \bigcap_{r=1}^m P_r^{(1)}, \quad I_2 := \bigcap_{r=1}^m P_r^{(2)}.
\]
Then $\Phi(I_1, I_2) \in \{ I \in \mathrm{Id}(V \otimes^{tmin}  B) \mid \mathrm{hull}(I) \subseteq W_r \text{ for all } r \}$, proving that the image of $\Phi$ is dense in $\mathrm{Id}'(V \otimes^{\ tmin} B)$.

(v) The result follows lines similar to those used for (i) using (\cite{AA}, Proposition 3) and (\cite{EB}, Lemma 2.13(v)).
\\
\end{proof}

\section{Primal Ideals of $V \otimes^{tmin} B$}

In this section, we investigate the structure  of primal and minimal primal ideals in  \( V \otimes^{\mathrm{tmin}} B \). To do so, we begin by recalling the notion of primality for ideals in a TRO.

\begin{definition}
An ideal \( P \) of a TRO \( V \) is called \emph{primal} if for every collection of ideals \( J_1, J_2, \dots, J_{2n+1} \subseteq V \), with \( n \in \mathbb{N} \), the condition
\[
J_1 J_2 \cdots J_{2n+1} = \{0\}
\]
implies that \( J_k \subseteq P \) for some \( 1 \leq k \leq 2n+1 \). The set of all primal ideals of \( V \) is denoted by \( \operatorname{Primal}(V) \).
\end{definition}

\begin{example}
Let \( X \) be a compact Hausdorff space, and let \( V \) be a TRO. Consider the TRO \( C(X, V) \), consisting of all continuous functions from \( X \) into \( V \), equipped with pointwise operations. For any \( x_0 \in X \) and any primal ideal \( P \subseteq V \), define
\[
I_{x_0, P} := \{ f \in C(X, V) : f(x_0) \in P \}.
\]
We claim that \( I_{x_0, P} \) is a primal ideal in \( C(X, V) \). Suppose not. Then there exist ideals \( J_1, \dots, J_{2n+1} \subseteq C(X, V) \) with \( J_1 J_2 \cdots J_{2n+1} = \{0\} \) and \( J_k \nsubseteq I_{x_0, P} \) for all \( k \). 

Let \( \varepsilon_{x_0} : C(X, V) \to V \) denote the evaluation map at \( x_0 \), given by \( \varepsilon_{x_0}(f) = f(x_0) \). Applying \( \varepsilon_{x_0} \), we obtain TRO ideals \( \varepsilon_{x_0}(J_k) \subseteq V \) with 
\[
\varepsilon_{x_0}(J_1) \cdots \varepsilon_{x_0}(J_{2n+1}) = \{0\}.
\]
Since \( P \) is primal, there exists \( k \) such that \( \varepsilon_{x_0}(J_k) \subseteq P \), implying \( J_k \subseteq I_{x_0, P} \), a contradiction. Hence, \( I_{x_0, P} \in \operatorname{Primal}(C(X, V)) \).
\end{example}

\begin{remark}
An ideal \( P \subseteq V \) is primal if and only if the ideal \( \mathcal{A}(P) \subseteq \mathcal{A}(V) \) is a primal ideal in the linking \( C^* \)-algebra. Indeed, this follows from the fact that every ideal in \( \mathcal{A}(V) \) is of the form \( \mathcal{A}(I) \) for some ideal \( I \subseteq V \), and that the ternary product of an odd number of ideals in \( V \) reduces to their intersection (\cite{AA}, Lemma $1$). The correspondence between products and intersections of ideals is preserved under the functor \( \mathcal{A} \), as is the inclusion \( I \subseteq P \) if and only if \( \mathcal{A}(I) \subseteq \mathcal{A}(P) \).
\end{remark}

\begin{proposition}
 For proper primal ideals \( I \subseteq V \) and \( J \subseteq B \), $\Phi(I, J)$ is a primal ideal in \( V \otimes^{\mathrm{tmin}} B \). In particular, if  $V$ or $B$ is exact, then $I \otimes^{\text{tmin}} B+ V \otimes^{\text{tmin}} J $ is a primal ideal of $V \otimes^{\text{tmin}} B.$
\end{proposition}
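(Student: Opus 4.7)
The plan is to transfer the primality question to the linking $C^*$-algebra, invoke the analogous known result for the minimal tensor product of $C^*$-algebras, and then pull back via the functor $\mathcal{A}$. Concretely, by the remark, the primality of $I\subseteq V$ is equivalent to the primality of $\mathcal{A}(I)\subseteq \mathcal{A}(V)$, and $J$ is already a primal ideal of the $C^*$-algebra $B$. Applying the $C^*$-algebraic statement that $\ker(q_{\mathcal{A}(I)}\otimes^{\min}q_J)$ is primal in $\mathcal{A}(V)\otimes^{\min}B$ whenever $\mathcal{A}(I)$ and $J$ are primal (cf.\ the results cited from Archbold's work in the introduction), we obtain a primal ideal of $\mathcal{A}(V)\otimes^{\min}B = \mathcal{A}(V\otimes^{\mathrm{tmin}}B)$.

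Next, I would use the identity
\[
\ker(q_{\mathcal{A}(I)}\otimes^{\min}q_J) \;=\; \mathcal{A}\bigl(\ker(q_I\otimes^{\mathrm{tmin}}q_J)\bigr) \;=\; \mathcal{A}(\Phi(I,J)),
\]
already established via \cite{AAV}, Lemma $2.7$ and used in the proof of Theorem~\ref{thm3main}(iii). This shows $\mathcal{A}(\Phi(I,J))$ is primal in the linking $C^*$-algebra of $V\otimes^{\mathrm{tmin}}B$, so the remark gives that $\Phi(I,J)$ is a primal ideal of $V\otimes^{\mathrm{tmin}}B$, which is the first assertion.

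For the second assertion, the strategy is to show that in the exact case the ideals $\Phi(I,J)$ and $\Delta(I,J)$ coincide, so that primality transfers. Assuming $V$ or $B$ is exact, Lemma 3 of \cite{AA} yields exactness of $\mathcal{A}(V)$ (or $B$), and therefore by the standard $C^*$-algebraic fact about kernels of tensor products of quotient maps in the exact setting,
\[
\ker(q_{\mathcal{A}(I)}\otimes^{\min}q_J) \;=\; \mathcal{A}(I)\otimes^{\min}B + \mathcal{A}(V)\otimes^{\min}J.
\]
Applying the functor $\mathcal{A}$ to $\Delta(I,J)$ and using $\mathcal{A}(X\otimes^{\mathrm{tmin}}Y)=\mathcal{A}(X)\otimes^{\min}Y$ on each summand, the right-hand side is precisely $\mathcal{A}(\Delta(I,J))$. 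Since $\mathcal{A}$ is injective on ideals, we conclude $\Phi(I,J)=\Delta(I,J)$, and the first part of the proposition delivers primality of $\Delta(I,J)$.

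The main obstacle I anticipate is checking that the identity $\mathcal{A}(I\otimes^{\mathrm{tmin}}B + V\otimes^{\mathrm{tmin}}J) = \mathcal{A}(I)\otimes^{\min}B + \mathcal{A}(V)\otimes^{\min}J$ holds on the nose; this requires that $\mathcal{A}$ commutes with sums of ideals (which is a general property of the linking construction) together with the tensor-product identity from Proposition 3.1 of \cite{MK_JR} applied to the TRO $I$ rather than to $V$. Once this compatibility is verified, the argument reduces cleanly to the known $C^*$-algebraic statements and no further work is needed.
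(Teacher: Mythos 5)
Your proposal is correct and follows essentially the same route as the paper: transfer to the linking $C^*$-algebra via $\mathcal{A}$, identify $\mathcal{A}(\Phi(I,J))$ with $\ker(q_{\mathcal{A}(I)}\otimes^{\min}q_J)$ using \cite{AAV}, Lemma 2.7, invoke the known $C^*$-algebraic primality result (the paper cites \cite{za}, Proposition 3.3), and pull back through the remark. For the second assertion the paper simply cites \cite{AA}, Lemma 9 for $\Phi(I,J)=\Delta(I,J)$ in the exact case, which is exactly the fact you re-derive through the linking algebra, so no substantive difference.
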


\begin{proof}
    As $q_I \otimes^{\text{tmin}} q_J: V \otimes^{\text{tmin}}B \to V/I \otimes^{\text{tmin}} B/J $ is a ternary homomorphism, so applying the functor $\mathcal{A}$ and using (\cite{AKKKK}, Proposition $4.6$)  $\mathcal{A}(q_I \otimes^{\text{tmin}} q_J): \mathcal{A}(V) \otimes^{\text{min}}B \to \mathcal{A}(V)/\mathcal{A}(I) \otimes^{\text{min}} B/J $ is a $C^{\ast}$-homomorphism. Since $\operatorname{ker}(\mathcal{A}(q_I \otimes^{\text{tmin}} q_J))= \mathcal{A}(\operatorname{ker}(q_I \otimes^{\text{tmin}} q_J))$  (\cite{AAV}, Lemma $2.7$) and   $\operatorname{ker}(\mathcal{A}(q_I \otimes^{\text{tmin}} q_J))$ is primal (\cite{za}, Proposition $3.3$) so $\ker(q_I \otimes^{\mathrm{tmin}} q_J)$ is a primal ideal in \( V \otimes^{\mathrm{tmin}} B \) by last remark.   Second assertion follows immediately from (\cite{AA}, Lemma $9$). 
\end{proof}

\begin{remark}
    Since $C(X)$ is exact, Example $4.1$ can also be obtained from Proposition $4.1$ by considering $C(X, V)$ as $C(X) \otimes^{\text{tmin}} V$ (\cite{AAV}, Proposition $3.4$).  
\end{remark}

The next result shows that the second part of the previous proposition also admits a converse under the same assumptions.

\begin{proposition}
If \( V \) or \( B \) is exact and $I \otimes^{\mathrm{tmin}} B + V \otimes^{\mathrm{tmin}} J$ is a primal ideal in \( V \otimes^{\mathrm{tmin}} B \), then \( I \) and \( J \) are primal ideals.
\end{proposition}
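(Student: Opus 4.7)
The plan is to reduce the statement to its $C^*$-algebraic analogue via the linking functor $\mathcal{A}$, exploiting the remark that primality of an ideal $P \subseteq V$ is equivalent to primality of $\mathcal{A}(P) \subseteq \mathcal{A}(V)$, together with the identification $\mathcal{A}(V \otimes^{\mathrm{tmin}} B) = \mathcal{A}(V) \otimes^{\mathrm{min}} B$ from \cite{MK_JR}.

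First, I would set $K := I \otimes^{\mathrm{tmin}} B + V \otimes^{\mathrm{tmin}} J$ and apply $\mathcal{A}$ to rewrite $\mathcal{A}(K)$ as $\mathcal{A}(I) \otimes^{\mathrm{min}} B + \mathcal{A}(V) \otimes^{\mathrm{min}} J$ inside the $C^*$-algebra $\mathcal{A}(V) \otimes^{\mathrm{min}} B$, using (\cite{AA}, Lemma 9) to move the functor past the t-minimal tensor product and the sum. Since $K$ is primal in $V \otimes^{\mathrm{tmin}} B$, the remark preceding Proposition~4.1 ensures that $\mathcal{A}(K)$ is primal in $\mathcal{A}(V) \otimes^{\mathrm{min}} B$.

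Next, since $V$ or $B$ is exact, (\cite{AA}, Lemma 3) gives that $\mathcal{A}(V)$ is exact. We are therefore in the situation of a primal ideal of the form $\mathcal{A}(I) \otimes^{\mathrm{min}} B + \mathcal{A}(V) \otimes^{\mathrm{min}} J$ inside the minimal tensor product of two $C^*$-algebras, one of which is exact. I would invoke the corresponding $C^*$-algebraic converse established in \cite{za} (the partner of (\cite{za}, Proposition~3.3)), which states that under these exactness hypotheses primality of the sum forces both $\mathcal{A}(I)$ and $J$ to be primal ideals in $\mathcal{A}(V)$ and $B$, respectively.

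Finally, to conclude I would pull primality of $\mathcal{A}(I)$ back to $V$ using the remark once more, obtaining that $I$ is primal in $V$, while primality of $J$ in $B$ is already $C^*$-algebraic. The main obstacle in this plan is the identification of $\mathcal{A}(K)$: one must verify that $\mathcal{A}$ commutes with the particular sum $I \otimes^{\mathrm{tmin}} B + V \otimes^{\mathrm{tmin}} J$ and not merely with each summand in isolation, which is where (\cite{AA}, Lemma 9) is crucial; once this identification is secured, the rest of the argument is a faithful transcription of the $C^*$-algebraic converse through the functor $\mathcal{A}$.
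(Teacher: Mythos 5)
Your proposal follows essentially the same route as the paper: apply the linking functor $\mathcal{A}$ to identify $\mathcal{A}(I \otimes^{\mathrm{tmin}} B + V \otimes^{\mathrm{tmin}} J)$ with $\mathcal{A}(I) \otimes^{\mathrm{min}} B + \mathcal{A}(V) \otimes^{\mathrm{min}} J$, transfer primality via the remark, use exactness of $\mathcal{A}(V)$ or $B$ to invoke the $C^*$-algebraic converse, and pull back. The only differences are cosmetic choices of citation (the paper uses \cite{MK_JR}, Proposition 3.1 for the identification and \cite{Kanimin}, Lemma 1.2 for the $C^*$-algebraic step), so the argument is correct and matches the paper's proof.
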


\begin{proof}
Using Remark 4.1, $\mathcal{A}(I \otimes^{\mathrm{tmin}} B + V \otimes^{\mathrm{tmin}} J) = \mathcal{A}(I) \otimes^{\text{min}} B + \mathcal{A}(V) \otimes^{\text{min}} J$ (\cite{MK_JR}, Proposition 3.1)  is a primal ideal in \( \mathcal{A}(V) \otimes^{\mathrm{min}} B \). Since \( V \) or \( B \) is exact, it follows from (\cite{MK_JR}, Theorem 4.4) that \( \mathcal{A}(V) \) or \( B \) is exact. Then by (\cite{Kanimin}, Lemma 1.2), the ideals \( \mathcal{A}(I) \) and \( J \) are primal. Consequently, \( I \) and \( J \) are primal ideals.
\end{proof}

We observe that an ideal \( I \) of a TRO \( V \) is minimal if and only if the corresponding ideal \( \mathcal{A}(I) \) is a minimal ideal of \( \mathcal{A}(V) \). Using this fact, the proof of the next result follows along the same lines as the proof of the previous proposition, combined with (\cite{Kanimin}, Lemma 1.3).

\begin{proposition}
  If \( V \) or \( B \) is exact  then \( I \) and \( J \) are minimal primal ideals if and only if $I \otimes^{\mathrm{tmin}} B + V \otimes^{\mathrm{tmin}} J$
is a minimal primal ideal in \( V \otimes^{\mathrm{tmin}} B \).
\end{proposition}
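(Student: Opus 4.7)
The plan is to reduce the claim to the corresponding $C^*$-algebraic statement via the linking functor $\mathcal{A}$, following the template of Proposition 4.2 and using the minimality preservation recorded just before the statement.

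First, I would translate both sides of the equivalence under $\mathcal{A}$. By Remark~4.1 together with $(\cite{MK_JR},\text{ Proposition }3.1)$,
\[
\mathcal{A}\bigl(I \otimes^{\mathrm{tmin}} B + V \otimes^{\mathrm{tmin}} J\bigr) \;=\; \mathcal{A}(I) \otimes^{\min} B + \mathcal{A}(V) \otimes^{\min} J.
\]
Since $\mathcal{A}$ gives an order isomorphism of ideal lattices that preserves the primal property (Remark~4.1) and preserves being minimal (the observation immediately preceding the statement), the right-hand ideal is minimal primal in $\mathcal{A}(V) \otimes^{\min} B$ if and only if $I \otimes^{\mathrm{tmin}} B + V \otimes^{\mathrm{tmin}} J$ is minimal primal in $V \otimes^{\mathrm{tmin}} B$; similarly $\mathcal{A}(I)$ and $J$ are minimal primal if and only if $I$ and $J$ are. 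The problem is thus reduced to the $C^*$-algebraic setting for the pair $(\mathcal{A}(V), B)$ with distinguished ideals $(\mathcal{A}(I), J)$.

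Second, I would invoke the exactness transfer $(\cite{MK_JR},\text{ Theorem }4.4)$: if $V$ or $B$ is exact, then $\mathcal{A}(V)$ or $B$ is exact. This places us precisely in the hypotheses of $(\cite{Kanimin},\text{ Lemma }1.3)$, which delivers the corresponding equivalence in the $C^*$-algebraic setting: $\mathcal{A}(I)$ and $J$ are minimal primal ideals of $\mathcal{A}(V)$ and $B$ respectively if and only if $\mathcal{A}(I) \otimes^{\min} B + \mathcal{A}(V) \otimes^{\min} J$ is a minimal primal ideal of $\mathcal{A}(V) \otimes^{\min} B$. Combined with the first step, this yields the stated TRO-level equivalence.

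I do not anticipate a serious technical obstacle. The argument is exactly parallel to Proposition~4.2, with minimality handled through the bijective order correspondence $I \leftrightarrow \mathcal{A}(I)$ between ideals of $V$ and ideals of $\mathcal{A}(V)$. The only auxiliary point to check is properness, which is preserved in both directions because $\mathcal{A}$ sends proper ideals to proper ideals. The main (modest) care needed is to keep straight the two separate dictionary equivalences — primal $\leftrightarrow$ primal and minimal $\leftrightarrow$ minimal — and to apply them simultaneously to each side of the biconditional so that the cited $C^*$-algebraic lemma can be invoked without loss.
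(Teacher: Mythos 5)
Your proposal matches the paper's intended argument: the authors likewise observe that $\mathcal{A}$ preserves minimality of (primal) ideals, note that $\mathcal{A}(I \otimes^{\mathrm{tmin}} B + V \otimes^{\mathrm{tmin}} J) = \mathcal{A}(I) \otimes^{\min} B + \mathcal{A}(V) \otimes^{\min} J$, transfer exactness via (\cite{MK_JR}, Theorem 4.4), and invoke (\cite{Kanimin}, Lemma 1.3) exactly as in Proposition 4.2. Your reduction is correct and takes essentially the same route.
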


The previous result shows that under the assumption that either \( V \) or \( B \) is exact, the map \( \Phi \) defines a bijection from $\operatorname{Min\text{-}Primal}(V) \times \operatorname{Min\text{-}Primal}(B)$ onto  $\operatorname{Min\text{-}Primal}(V \otimes^{\mathrm{tmin}} B)$. In the next result, we show that \( \Phi \) is, in fact, a homeomorphism under a condition that is weaker than exactness.

\begin{theorem}
Suppose that \( V \) is a TRO and \( B \) is a \( C^* \)-algebra such that $\Phi(I, J) = \Delta(I, J)$ for all ideals \( I \subseteq V \) and \( J \subseteq B \). Then the map
\[
\Phi \colon \operatorname{Min\text{-}Primal}(V) \times \operatorname{Min\text{-}Primal}(B) \to \operatorname{Min\text{-}Primal}(V \otimes^{\mathrm{tmin}} B)
\]
is a homeomorphism.
\end{theorem}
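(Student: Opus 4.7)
The plan is to reduce the statement to the corresponding homeomorphism theorem for minimal $C^{\ast}$-tensor products via the linking algebra $\mathcal{A}(V)$, and then recover the topological content from Theorem \ref{thm3main}.

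First, I would verify that the hypothesis $\Phi(I,J) = \Delta(I,J)$ at the TRO level forces the analogous identity
\[
\ker(q_{\mathcal{A}(I)} \otimes^{\min} q_J) \;=\; \mathcal{A}(I) \otimes^{\min} B \,+\, \mathcal{A}(V) \otimes^{\min} J
\]
for every ideal pair of $\mathcal{A}(V)$ and $B$. This is obtained by applying the functor $\mathcal{A}$ to both sides of $\Phi(I,J) = \Delta(I,J)$, using $\mathcal{A}(\Phi(I,J)) = \ker(q_{\mathcal{A}(I)} \otimes^{\min} q_J)$ (\cite{AAV}, Lemma 2.7) and $\mathcal{A}(\Delta(I,J)) = \mathcal{A}(I) \otimes^{\min} B + \mathcal{A}(V) \otimes^{\min} J$ (\cite{MK_JR}, Proposition 3.1), together with the bijective correspondence between ideals of $V$ and ideals of $\mathcal{A}(V)$. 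In this way the $C^{\ast}$-algebraic analogue of the hypothesis holds for the pair $(\mathcal{A}(V), B)$.

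Next, with this $C^{\ast}$-algebra hypothesis in place, I would invoke the minimal primal version of the corresponding result for minimal $C^{\ast}$-tensor products (in the spirit of the analysis in \cite{Kanimin}), which yields a bijective correspondence between $\operatorname{Min\text{-}Primal}(\mathcal{A}(V)) \times \operatorname{Min\text{-}Primal}(B)$ and $\operatorname{Min\text{-}Primal}(\mathcal{A}(V) \otimes^{\min} B)$. Using Remark 4.1 to identify primal ideals of $V$ with primal ideals of $\mathcal{A}(V)$, and the observation (stated before Proposition 4.3) that $\mathcal{A}$ preserves minimality of ideals, this transfers to a bijection $\Phi \colon \operatorname{Min\text{-}Primal}(V) \times \operatorname{Min\text{-}Primal}(B) \to \operatorname{Min\text{-}Primal}(V \otimes^{\mathrm{tmin}} B)$.

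Finally, for the topological content, $\Phi$ is $\tau_w$-continuous by Theorem \ref{thm3main}(iii), and by Theorem \ref{thm3main}(iv) its restriction to $\operatorname{Id}'(V) \times \operatorname{Id}'(B)$ is a homeomorphism onto its image in $\operatorname{Id}'(V \otimes^{\mathrm{tmin}} B)$. Since the spaces $\operatorname{Min\text{-}Primal}$ carry the subspace topology inherited from $\operatorname{Id}'$, the restriction of $\Phi$ to $\operatorname{Min\text{-}Primal}(V) \times \operatorname{Min\text{-}Primal}(B)$ is automatically a homeomorphism onto its image, which by the previous paragraph coincides with $\operatorname{Min\text{-}Primal}(V \otimes^{\mathrm{tmin}} B)$. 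The main obstacle I anticipate is confirming that the $C^{\ast}$-algebra min-primal correspondence is available without assuming exactness of $\mathcal{A}(V)$ or $B$; the hypothesis $\Phi = \Delta$ is precisely the substitute for exactness that makes the cited $C^{\ast}$-algebra result applicable, so once this reduction is in place the rest is formal.
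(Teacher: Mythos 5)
Your proposal is correct and follows essentially the same route as the paper: apply the functor $\mathcal{A}$ to transfer the hypothesis $\Phi=\Delta$ to the pair $(\mathcal{A}(V),B)$, invoke the $C^{\ast}$-algebraic minimal-primal homeomorphism theorem (the precise reference is Archbold \cite{AR}, Theorem 4.1, which assumes exactly the condition $\Phi'=\Delta'$ rather than exactness, confirming the substitution you anticipated), and transfer back via the ideal correspondence. The only cosmetic difference is that you obtain the topological content from Theorem \ref{thm3main}(iv) directly, whereas the paper runs a commutative diagram in which $\theta\times 1$, $\Theta'$ and $\Phi'$ are homeomorphisms; both are valid.
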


\begin{proof}
Let 
\[
\Phi' \colon \operatorname{Min\text{-}Primal}(\mathcal{A}(V)) \times \operatorname{Min\text{-}Primal}(B) \to \operatorname{Min\text{-}Primal}(\mathcal{A}(V) \otimes^{\mathrm{min}} B)
\]
and
\[
\Delta' \colon \operatorname{Min\text{-}Primal}(\mathcal{A}(V)) \times \operatorname{Min\text{-}Primal}(B) \to \operatorname{Min\text{-}Primal}(\mathcal{A}(V) \otimes^{\mathrm{min}} B)
\]
be the maps defined in the same way as \( \Phi \) and \( \Delta \).

Since \( \Phi(I, J) = \Delta(I, J) \) for all  ideals \( I \subseteq V \), \( J \subseteq B \), applying the functor \( \mathcal{A} \) to both sides yields
\[
\Phi'(\mathcal{A}(I), J) = \Delta'(\mathcal{A}(I), J)
\]
for all  ideals \( \mathcal{A}(I) \subseteq \mathcal{A}(V) \) and \( J \subseteq B \). Thus, \( \Phi' = \Delta' \), and hence \( \Phi' \) is a homeomorphism by (\cite{AR}, Theorem $4.1$).

Therefore, we obtain the following commutative diagram:
\[
\begin{tikzcd}
  \operatorname{Min\text{-}Primal}(V) \times \operatorname{Min\text{-}Primal}(B)
    \arrow[r, "\Phi"]
    \arrow[d, swap, "\theta \times 1"]
  &
  \operatorname{Min\text{-}Primal}(V \otimes^{\text{tmin}} B)
    \arrow[d, swap, "\Theta'"]
  \\
  \operatorname{Min\text{-}Primal}(\mathcal{A}(V)) \times \operatorname{Min\text{-}Primal}(B) 
    \arrow[r, swap, "\Phi'"]
  &
  \operatorname{Min\text{-}Primal}(\mathcal{A}(V)\otimes^{\mathrm{min}} B)
\end{tikzcd}
\]

Since \( \theta \times 1 \), \( \Theta' \), and \( \Phi' \) are all homeomorphisms and the diagram commutes, it follows that \( \Phi \) is also a homeomorphism.
\end{proof}

From Example $4.1$, we know that for $x_0 \in X$ and a primal ideal $P$ of $V$,  \( I_{x_0, P} = \{ f \in C(X, V) \mid f(x_0) \in P \} \) is a primal ideal of \( C(X, V) \). Let \( I \) be an arbitrary primal ideal of \( C(X, V) \). Then  \(\mathcal{A}(I)\) is a primal ideal of \( C(X, \mathcal{A}(V)) \) (see Remark 4.1). By  (\cite{za}, Proposition 4.2) we have 
\[
\mathcal{A}(I) = I_{x_0, \mathcal{A}(P)} = \mathcal{A}(I_{x_0, P})
\]
for some primal ideal \( P \) of \( V \). It follows that \( I = I_{x_0, P} \). Thus, we obtain the following result:

\begin{proposition}
    Suppose that \( I \) is a primal ideal of \( C(X, V) \). Then there exist \( x_0 \in X \) and a primal ideal \( P \) of \( V \) such that 
    \[
    I = I_{x_0, P}.
    \]
\end{proposition}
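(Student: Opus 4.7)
The plan is to reduce the statement to the corresponding result for $C^*$-algebras, namely (\cite{za}, Proposition 4.2), by passing through the linking algebra functor $\mathcal{A}$. The key observation is that $C(X,V)$ can be identified with $C(X) \otimes^{\mathrm{tmin}} V$ (as noted in Remark 4.2), and since $C(X)$ is a nuclear $C^*$-algebra, applying (\cite{MK_JR}, Proposition 3.1) yields $\mathcal{A}(C(X,V)) = \mathcal{A}(V) \otimes^{\min} C(X) \cong C(X, \mathcal{A}(V))$. This puts us in the $C^*$-algebraic setting where the classical description of primal ideals is available.

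First, I would take a primal ideal $I$ of $C(X,V)$ and, using Remark 4.1, conclude that $\mathcal{A}(I)$ is a primal ideal of $\mathcal{A}(C(X,V)) \cong C(X,\mathcal{A}(V))$. Then I would apply (\cite{za}, Proposition 4.2) to obtain $x_0 \in X$ and a primal ideal $Q$ of $\mathcal{A}(V)$ such that $\mathcal{A}(I) = I_{x_0, Q}$. Since every ideal of $\mathcal{A}(V)$ is of the form $\mathcal{A}(P)$ for some ideal $P$ of $V$, and primality is preserved in both directions by Remark 4.1, we can write $Q = \mathcal{A}(P)$ for some primal ideal $P$ of $V$.

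Next, I would verify the identity $\mathcal{A}(I_{x_0, P}) = I_{x_0, \mathcal{A}(P)}$. The inclusion $\subseteq$ is straightforward since evaluation at $x_0$ intertwines with the functor $\mathcal{A}$, and the reverse inclusion follows from the fact that $\mathcal{A}$ is injective on ideals together with the explicit description of ideals in the linking algebra. Combining this with the previous step gives $\mathcal{A}(I) = \mathcal{A}(I_{x_0, P})$, and injectivity of $\mathcal{A}$ on ideals (Theorem 2.1) then forces $I = I_{x_0, P}$.

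The main obstacle I anticipate is the careful verification of the identity $\mathcal{A}(I_{x_0,P}) = I_{x_0, \mathcal{A}(P)}$, since this requires unpacking how the functor $\mathcal{A}$ interacts with the evaluation functionals and the pointwise structure on $C(X,V)$. Once this compatibility is established, the rest of the argument is a transparent descent from the $C^*$-algebraic result via the functor $\mathcal{A}$.
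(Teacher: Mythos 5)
Your proposal follows essentially the same route as the paper: pass to the linking algebra via Remark 4.1, identify $\mathcal{A}(C(X,V))$ with $C(X,\mathcal{A}(V))$, apply Zaki's Proposition 4.2 there, use the identity $\mathcal{A}(I_{x_0,P}) = I_{x_0,\mathcal{A}(P)}$, and descend by injectivity of $\mathcal{A}$ on ideals. Your version is if anything slightly more careful, since you flag the verification of $\mathcal{A}(I_{x_0,P}) = I_{x_0,\mathcal{A}(P)}$ as a step needing justification, which the paper asserts without comment.
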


\begin{example}
Let $\mathcal{H}$ be an infinite dimensional separable Hilbert spaces and  \( V \) be an exact TRO. Then \( C(X, V) \) is also exact, since its linking \( C^* \)-algebra \( C(X, \mathcal{A}(V)) \) is exact. By Propositions 4.1 and 4.4, it follows that
\[
I_{x_0, P} \otimes^{\text{tmin}} B(\mathcal{H}) + C(X, V) \otimes^{\text{tmin}} \mathcal{K}(\mathcal{H})
\]
is a primal ideal of \( C(X, V) \).
\end{example}

\end{document}